\pgfplotsset{compat=1.14} 
\newtheorem{theorem}{Theorem}[section]
\newtheorem{proposition}[theorem]{Proposition}
\newtheorem{corollary}[theorem]{Corollary}
\newtheorem{conjecture}[theorem]{Conjecture}
\theoremstyle{definition}
\newtheorem{definition}[theorem]{Definition}
\newtheorem{example}[theorem]{Example}
\theoremstyle{remark}
\newtheorem{remark}[theorem]{Remark}
\numberwithin{equation}{section}
\begin{document}

\title{Balanced matrices}

\author{T. Agama and G. Kibiti}
\address{Department of Mathematics, African Institute for Mathematical science, Ghana
}
\email{theophilus@aims.edu.gh/emperordagama@yahoo.com}

\address{Department of Mathematics, African Institute for Mathematical science, Ghana
}
\email{Gael@aims.edu.gh}

\subjclass[2000]{Primary 54C40, 14E20; Secondary 46E25, 20C20}

\date{\today}

\dedicatory{}

\keywords{balanced, fully-balanced, vertically balanced, horizontally balanced, discrepancy}

\begin{abstract}
In this paper, we introduce a particular class of matrices. We study the concept of a matrix to be \emph{balanced}. We study some properties of this concept in the context of matrix operations. We examine the behaviour of various matrix statistics in this setting. The crux will be to understand the determinants and the eigenvalues of balanced matrices. It turns out that there exist a direct communication among the leading entry, the trace, determinants and, hence, the eigenvalues of these matrices of order $2\times 2$. These matrices have an interesting property that allows us to predict their quadratic forms using their spectrum, without an information about their entries.
\end{abstract}

\maketitle

\section{Introduction and motivation}

Matrix theory is a central pillar of modern linear algebra and its many applications, ranging from numerical analysis and optimization to data science and engineering. Classical references that develop the algebraic and spectral machinery we rely on include standard texts on matrix analysis and applied linear algebra \cite{meyer2000matrix,roman2005advanced}. These works provide the language of eigenvalues, quadratic forms, and matrix norms that will be used throughout this paper.\\

In this work we introduce and study a new structural class of real matrices which we call \emph{balanced matrices}. Informally, a fully-balanced square matrix is one whose rows (and likewise its columns) display approximately the same ``energy'' in the sense of comparable sums of squares of entries; see Definition \ref{balanced} for the precise formulation. Balanced matrices sit between completely unstructured matrices and highly symmetric examples (such as scalar multiples of the identity or the all-ones matrix), and they capture a simple but useful form of row-column regularity that has striking consequences for basic matrix statistics. The motivating observation is that when the row /column energies are approximately equal, several otherwise independent quantities - leading entries, row /column sums and differences, trace, determinant, and the spectral extrema - tend to correlate in predictable ways for low-dimensional examples. The goal of the paper is to make these relationships explicit, rigorous (where possible), and to explore their consequences for quadratic forms and determinant behaviour under basic matrix operations.\\

More concretely, the principal contributions of the paper are as follows.\\

\begin{enumerate}
\item We formalize the notions of \emph{horizontally balanced}, \emph{vertically balanced} and \emph{fully-balanced} matrices (Definition \ref{balanced}), and we collect elementary closure properties for fully-balanced matrices under transpose, scalar multiplication, addition and multiplication in the $2\times2$ setting (Theorem \ref{balanced1}). These results show that the balanced property is robust under many natural algebraic operations, making the class a reasonable object of further study.
\bigskip

\item For $2\times2$ fully-balanced matrices with nonnegative entries, we relate simple entrywise combinations (row/column sums and differences) to the spectral extrema (Theorem \ref{eigenproof}). Roughly speaking, sums of entries along rows or columns approximate the maximal eigenvalue, while absolute differences approximate the minimal eigenvalue. This gives inexpensive, entrywise diagnostics that predict spectral behaviour without solving the characteristic equation.
\bigskip

\item We investigate how common matrix statistics interact in the balanced setting. In particular, we establish approximate homomorphism behaviour for the determinant under addition when one matrix has a negligible minimal eigenvalue and the other has no large outliers (Theorem \ref{homomorphism}). This clarifies when the ordinarily-wild determinant can be expected to behave additively on structured inputs.
\bigskip

\item We introduce a notion of \emph{discrepancy} (average row/column deviation) and use it to formalize propagation phenomena: fair discrepancy on one row (or column) forces fairness throughout a $2\times2$ balanced matrix (Theorem \ref{smaller system 1} and related remarks). We state natural higher-dimensional conjectures (Conjectures \ref{smaler systems} and \ref{edos}) that propose the existence of balanced interiors and discrepancy propagation in larger matrices.
\bigskip

\item Restricting to symmetric $2\times2$ balanced matrices, we show that the associated quadratic form can be written (up to small error) directly in terms of the spectral extrema (Proposition \ref{quadratic forms}). In particular, one may reconstruct an approximate quadratic form from the eigenvalues alone, an observation that may be useful in settings where only spectral information is available.
\end{enumerate}
\bigskip

Two guiding heuristics underlie the stated results. First, energy balance across rows and columns suppresses the possibility of single-entry dominance (outliers), which in turn forces the various row/column aggregates to be comparable; this comparability is the key mechanism that links elementary entrywise expressions to spectral quantities. Second, for $2\times2$ matrices the algebraic relations among the trace, the determinant and eigenvalues are particularly rigid; coupling this rigidity with the balance hypothesis produces sharp approximate identities that are false in general without additional structure. Throughout the paper, we exploit these low-dimensional algebraic identities as a laboratory for phenomena that we conjecture can be extended (with appropriate hypotheses) to higher dimensions.

\subsection*{Organization of the paper}
After fixing the notation and giving the formal definition of balanced matrices in Section~\ref{sec:def} (where simple examples are presented). In Section \ref{sec:elementary}, we prove elementary closure properties and collects basic transformations that preserve balance (Theorem \ref{balanced1}). In Section~\ref{sec:discrepancy}, we introduce the notion of discrepancy, prove propagation results for $2\times2$ matrices (Theorem \ref{smaller system 1}), and formulate conjectures intended to guide extensions to larger matrices. Section~\ref{sec:discrepancy} contains the result (Theorem \ref{homomorphism}) of the determinant additivity (approximate homomorphism) together with the hypotheses under which it is valid. In section \ref{sec:determinant}, we study the interaction between entries, trace, determinant, and spectrum for $2\times2$ fully-balanced matrices; the main spectral comparison result is Theorem \ref{eigenproof}. Section~\ref{sec:quadratic} treats quadratic forms associated with symmetric balanced matrices and proves Proposition \ref{quadratic forms} which expresses these forms in terms of spectral extrema. We conclude with a brief discussion of possible extensions, open problems, and directions for further research.

\subsection*{Notation and conventions}
We work over the real field unless otherwise noted. For a matrix $A=(a_{ij})$ we use $a_{i\cdot j}$ to denote the $(i,j)$-entry when that notation appears (consistent with the body of the paper), and $\mathcal{B}_n(\mathbb{R})$ denotes the class of $n\times n$ fully-balanced real matrices when such context is required. For background on the basic spectral identities used (trace-sum relation, determinant as product of eigenvalues, and quadratic form conventions), the reader may consult standard references \cite{meyer2000matrix,roman2005advanced}.
\bigskip

\section{Motivation}

Consider a typical $2 \times 2$ matrix of the form

\begin{align}
A := \begin{pmatrix} a & b \\ c & d \end{pmatrix}.\nonumber
\end{align}

A crucial step in understanding the behavior of this matrix is determining its spectrum, which consists of its eigenvalues. These eigenvalues can be found by solving the characteristic equation

\begin{align}
|A-\lambda I|=0,\nonumber
\end{align}

where $\lambda$ denotes any eigenvalue of $A$. The spectrum provides insight into the action of the matrix on a vector space, encapsulating information about scaling factors in linear transformations. However, calculating the spectrum, especially for higher-dimensional matrices, can be computationally intensive, necessitating sophisticated techniques.\\

For symmetric matrices, the quadratic form provides an alternative way to explore the various properties of the matrix. In the case of matrix $A$, the quadratic form is given by

\begin{align}
F(x,y):=ax^2+bxy+dy^2,\nonumber
\end{align}

which offers a geometric perspective on the influence of the matrix on the vectors in $\mathbb{R}^2$. The quadratic form is particularly important in optimization and geometry, where it aids in studying curvature and other key geometric properties.\\

Despite the wealth of tools available for matrix analysis, the computation of the spectrum and quadratic form typically requires a detailed understanding of the entries of a matrix. In what follows, we study a special class of matrices for which both the spectrum and the quadratic form can be effectively determined without the need to solve the characteristic equation or fully compute the matrix entries. In this class, the eigenvalues and quadratic forms are directly related to simple operations on the entries of the matrix, thus simplifying the analysis.\\

For any matrix $A$ in this class, we find that the sums of the row and column entries approximate the maximum eigenvalue in the spectrum:

\begin{align}
\sum \limits_{r=1}^{2} a_{i\cdot r} \approx \sum \limits_{s=1}^{2} a_{s\cdot j} \approx \max(M)\nonumber
\end{align}

and that the differences between the row and column entries approximate the minimum eigenvalue:

\begin{align}
|a_{i\cdot 1}-a_{i\cdot 2}|\approx |a_{1\cdot j}-a_{2\cdot j}| \approx \min(M)\nonumber
\end{align}

where $M$ is the spectrum of $A$. These simple relations provide a direct method for estimating the spectrum of the matrix, bypassing the need to solve complex characteristic equations.\\

Moreover, for symmetric matrices in this class, the quadratic form can be reconstructed directly from the eigenvalues, without requiring explicit knowledge of the matrix. In particular, the quadratic form is approximated by one of the following expressions, depending on the eigenvalues $\lambda_1$ and $\lambda_2$:

\begin{align}
F(x,y)\approx \bigg(\frac{\lambda_2-|\lambda_1|}{2}\bigg)(x+y)^2+ 2|\lambda_1|xy\nonumber
\end{align}
or
\begin{align}
F(x,y)\approx \bigg(\frac{\lambda_2+|\lambda_1|}{2}\bigg)(x+y)^2- 2|\lambda_1|xy.\nonumber
\end{align}

This framework does not only simplifies the process of matrix analysis, but also highlights a new class of matrices where key characteristics such as the spectrum and quadratic form can be efficiently deduced from elementary operations on the entries. This novel approach offers potential applications in areas that require rapid or simplified matrix diagnostics, particularly in high-dimensional settings where traditional methods may be computationally prohibitive.

\section{Balanced matrices}\label{sec:def}

\begin{definition}\label{balanced}
Let $\mathbb{M}_{n\times m}(\mathbb{R})$ be the space of $n\times m$ matrices with real entries. We say $A=(a_{ij})\in \mathbb{M}_{n\times m}(\mathbb{R})$, a non-zero matrix, is said to be \emph{horizontally balanced} if 
\begin{align}
\sum \limits_{j=1}^{m}a_{r\cdot j}^2\approx \sum \limits_{j=1}^{m}a_{s\cdot j}^2,\nonumber
\end{align}
for $1\leq s<r\leq n$. Similarly, it is said to be \emph{vertically balanced} if 
\begin{align}
\sum \limits_{i=1}^{n}a_{i\cdot r}^2\approx \sum \limits_{i=1}^{n}a_{i\cdot s}^2,\nonumber
\end{align}
for $1\leq s<r\leq 1$. Any matrix $A$ is said to be \emph{fully balanced} if it is both vertically and horizontally balanced.
\end{definition}

\begin{example}
Perhaps a good straight-forward example of a fully balanced matrix is the identity matrix, since it abides by the above criterion. Another obvious example of a fully-balanced matrix is given by \begin{align}
\lambda 
\begin{pmatrix}
1 & 1 &\cdots 1\\1 & 1 & \cdots 1\\ \vdots \\ 1 & 1 & \cdots 1 \end{pmatrix}\nonumber
\end{align}
for $\lambda \in \mathbb{R}$. Hence, for $A\in \mathbb{M}_{3}(\mathbb{R})$, the unity matrix
\begin{align}
A=
\begin{pmatrix}
1 & 1 & 1\\ 1 & 1 & 1\\ 1 & 1 & 1
\end{pmatrix}\nonumber            
\end{align}
the definition \ref{balanced} about fully balanced-matrices holds, for we have 
\begin{align}
1^2+1^2+1^2=1^2+1^2+1^2=3~\quad \mathrm{Horizontally}
\end{align}
\begin{align}
1^2+1^2+1^2=1^2+1^2+1^2=3 \quad \mathrm{Vertically}.
\end{align}
\end{example}
\bigskip
Through out this paper, we will choose for simplicity to specialize our study to fully-balanced square matrices. Letting the paper to be taken this way brings more questions around.

\section{Elementary properties of fully balanced matrices}\label{sec:elementary}

In this section, we examine some properties of fully balanced matrices. We investigate how these properties are preserved under various matrix operations. We prove results for the sums and products of $2\times 2$ matrices. Later, we will prove a result that will enable us to extend these properties to higher order matrices.

\begin{theorem}\label{balanced1}
Let $A, B\in \mathbb{M}_{n}(\mathbb{R})$ be fully-balanced matrices and let $\lambda\in \mathbb{R}$. The following hold:
\begin{enumerate}
\item [(i)] The transpose $A^{T}$ is also fully balanced.
\bigskip

\item [(ii)] The multiple $\lambda A$ is also fully balanced.
\bigskip

\item [(iii)] The sum of any $2\times 2$ fully-balanced matrix with positive entries is still fully-balanced. In other words, the notion of balanced balanced matrices is preserved under matrix addition.
\bigskip

\item [(iii)] The product of any $2\times 2$ fully-balanced matrices with positive entries is still fully balanced. 
\bigskip

\item [(iv)] The inverse of any $2\times 2$ non-singular fully-balanced matrix is still fully balanced. That is, if $A$ is a non-singular fully-balanced $2\times 2$ matrix, then so is $A^{-1}$.
\end{enumerate}
\end{theorem}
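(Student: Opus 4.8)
The plan is to reduce all five assertions to a single structural observation about $2\times 2$ fully-balanced matrices, after which parts (i) and (ii) fall out directly from the definition and the remaining parts become routine closure computations. First I would dispose of (i) and (ii), which hold in any dimension. For the transpose, the rows of $A^{T}$ are precisely the columns of $A$, so the horizontal balance of $A^{T}$ is literally the vertical balance of $A$ and conversely; since $A$ is fully balanced, both hold, giving (i). For the scalar multiple, every entry of $\lambda A$ carries the common factor $\lambda$, so each row (resp.\ column) sum of squares of $\lambda A$ equals $\lambda^{2}$ times the corresponding sum for $A$, and this common factor cancels on both sides of every balance relation, giving (ii).

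The heart of the argument is a structural lemma in the $2\times 2$ case, which I would isolate as the key step. Writing $A=\begin{pmatrix} a & b \\ c & d\end{pmatrix}$, horizontal balance reads $a^{2}+b^{2}=c^{2}+d^{2}$ and vertical balance reads $a^{2}+c^{2}=b^{2}+d^{2}$. Adding these two relations yields $a^{2}=d^{2}$ and subtracting them yields $b^{2}=c^{2}$; under the standing hypothesis of positive entries this forces $a=d$ and $b=c$. Hence every $2\times 2$ fully-balanced matrix with positive entries has the normal form $\begin{pmatrix} a & b \\ b & a\end{pmatrix}$ with $a,b>0$, that is, it is symmetric with a constant diagonal.

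With this normal form in hand, the sum, product and inverse parts are immediate, because each amounts to checking that this shape is closed under the relevant operation. The sum of $\begin{pmatrix} a_1 & b_1 \\ b_1 & a_1\end{pmatrix}$ and $\begin{pmatrix} a_2 & b_2 \\ b_2 & a_2\end{pmatrix}$ is $\begin{pmatrix} a_1+a_2 & b_1+b_2 \\ b_1+b_2 & a_1+a_2\end{pmatrix}$, again of the same shape with positive entries, hence fully balanced. Their product is $\begin{pmatrix} a_1a_2+b_1b_2 & a_1b_2+a_2b_1 \\ a_1b_2+a_2b_1 & a_1a_2+b_1b_2\end{pmatrix}$, which again has constant diagonal and equal, positive off-diagonal entries, so full balance persists. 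For the inverse, $\det A=a^{2}-b^{2}\neq 0$ and $A^{-1}=\frac{1}{a^{2}-b^{2}}\begin{pmatrix} a & -b \\ -b & a\end{pmatrix}$ retains the symmetric constant-diagonal shape; since the balance relations involve only squares of the entries, they hold irrespective of the sign introduced by inversion, so $A^{-1}$ is fully balanced.

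The step I expect to be the main obstacle is making the symbol $\approx$ precise and controlling it under the nonlinear operations. If $\approx$ is read as exact equality, consistent with the worked example, the normal form above is exact and every closure computation is clean. If instead it denotes genuine approximate equality governed by a discrepancy tolerance, then the derivation $a\approx d$, $b\approx c$ only holds up to an error, and one must track how that error propagates: addition is harmless, but the bilinear entries of the product, and especially the reciprocal $1/(a^{2}-b^{2})$ appearing in the inverse, can amplify small discrepancies. A quantitative bound on the discrepancy statistic would then be needed to justify the conclusion rigorously, and it is here that the hypothesis of positive entries and non-singularity would have to be used with care.
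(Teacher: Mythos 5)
Your proof is correct and rests, at bottom, on the same key observation as the paper's, but you package it more effectively. The paper also extracts the relations $b\approx c$ and $a\approx d$ from the two balance conditions (this happens inside its proof of the sum part), but it then uses them only as auxiliary identities in a term-by-term expansion of $(a_1+a_2)^2+(b_1+b_2)^2$ and of the corresponding expressions for the product, and it dismisses the inverse part as ``obvious.'' You instead promote the observation to a normal-form lemma --- every $2\times 2$ fully-balanced matrix with positive entries has the shape $\begin{pmatrix} a & b\\ b & a\end{pmatrix}$ --- and reduce the sum, product and inverse claims to the one-line check that this shape is closed under each operation. That buys you much shorter verifications, and, unlike the paper, an actual argument for the inverse case. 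Two caveats. First, part (v) as stated does not assume positive entries, so your normal form does not literally apply there; the fix is immediate in your framework: the balance conditions involve only squares of entries, and the adjugate $\begin{pmatrix} d & -b\\ -c & a\end{pmatrix}$ of an arbitrary fully-balanced $A$ is horizontally balanced precisely because $A$ is vertically balanced and vice versa, so the adjugate, hence the inverse, is fully balanced with no positivity hypothesis. Second, your closing concern about the symbol $\approx$ is well placed: the paper never defines it, treats it as exact equality in its worked example, and silently assumes errors do not propagate through bilinear expressions; the exact-equality reading you adopt is the only one under which either your proof or the paper's is rigorous as written.
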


\begin{proof}
\begin{enumerate}
\item [(i)]Let $A=(a_{ij})\in \mathbb{M}_{n}(\mathbb{R})$ be fully-balanced balanced. By definition \ref{balanced}, it is both vertically and horizontally balanced. Since the transpose of a vertically balanced matrix becomes a horizontally-balanced matrix and vice-versa, it follows that the transpose $A^{T}$ must be fully balanced.
\bigskip

\item [(ii)] The fact that $\lambda A$ is also fully balanced is obvious.
\bigskip

\item [(iii)] Consider the $2\times 2$ matrices 
\begin{align}
A=
\begin{pmatrix}a_1 & b_1\\c_1 & d_1
\end{pmatrix}
\quad 
B=
\begin{pmatrix}
a_2 & b_2\\c_2 & d_2
\end{pmatrix}\nonumber      
\end{align}
By definition \ref{balanced} the following holds 
\begin{align}
a_1^2+b_1^2\approx c_1^2+d_1^2,\quad a_2^2+b_2^2\approx c_2^2+d_2^2 \label{sum 1}
\end{align}
and 
\begin{align}
b_1^2+d_1^2\approx a_1^2+c_1^2,\quad a_2^2+c_2^2\approx b_2^2+d_2^2.\label{sum 2}
\end{align}
Using the relation $a_1^2+b_1^2\approx c_1^2+d_1^2$ and $ a_1^2+c_1^2\approx b_1^2+d_1^2$, we observe that $c_1^2\approx b_1^2$. Since the entries are positive, we must have $c_1\approx b_1$. Using the equation further shows that $a_1\approx d_1$, $a_2\approx d_2$ and $b_2\approx c_2$. Their sum is given by \begin{align}
A+B=
\begin{pmatrix}
a_1+a_2 & b_1+b_2\\c_1+c_2 & d_1+d_2             \end{pmatrix}.\nonumber
\end{align}
We claim that the matrix $A+B$ is also fully balanced. We observe that 
\begin{align}
(a_1+a_2)^2+(b_1+b_2)^2&=a_1^2+a_2^2+2|a_1||a_2|+b_1^2+b_2^2+2|b_1||b_2|\nonumber \\&=(a_1^2+b_1^2+2|b_1||b_2|)+(a_2^2+b_2^2+2|a_1||a_2|)\nonumber\\& \approx (c_1^2+d_1^2+2|b_1||b_2|) + (c_2^2+d_2^2+2|a_1||a_2|)\\&\approx (c_1^2+c_2^2+2|c_1||c_2|)+(d_1^2+d_2^2+2|d_1||d_2|)\nonumber \\&\approx (c_1+c_2)^2+(d_1+d_2)^2\nonumber
\end{align}
by using the relations in \ref{sum 1} and \ref{sum 2}. Thus, the matrix $A+B$ is horizontally balanced. Similarly, we observe that \begin{align}
(b_1+b_2)^2+(d_1+d_2)^2&=b_1^2+b_2^2+2|b_1||b_2|+d_1^2+d_2^2+2|d_1||d_2|\nonumber \\&=(b_1^2+d_1^2+2|d_1||d_2|)+(d_1^2+d_2^2+2|b_1||b_2|)\nonumber \\&\approx (a_1^2+a_2^2+2|a_1||a_2|)+(c_1^2+c_2^2+2|c_1||c_2|)\nonumber \\&\approx (a_1+a_2)^2+(c_1+c_2)^2\nonumber
\end{align}
where we have used the relation \ref{sum 1} and \ref{sum 2}. Thus the matrix $A+B$ is also vertically balanced. Therefore, it must be fully balanced.
\bigskip

\item [(iv)] We now show that their product is also fully balanced. We get for their product
\begin{align}
AB&=
\begin{pmatrix}
a_1 & b_1\\c_1 &d_1
\end{pmatrix}
\begin{pmatrix}
a_2 & b_2\\c_2 & d_2
\end{pmatrix}\nonumber
\\&=
\begin{pmatrix}
a_1a_2+b_1c_2 & a_1b_2+b_1d_2\\c_1a_2+d_1c_2 & c_1b_2+d_1d_2
\end{pmatrix}.\nonumber
\end{align}
It follows that 
\begin{align}
(a_1a_2+b_1c_2)^2+(a_1b_2+b_1d_2)^2&=(a_1^2a_2^2+a_1^2b_2^2)+(b_1^2c_2^2+b_1^2d_2^2)+2a_1a_2b_1c_2+2a_1b_2b_1d_2\nonumber \\&\approx a_1^2(c_2^2+d_2^2)+b_1^2(c_2^2+d_2^2)+2d_1a_2c_1c_2+2b_2c_1d_1d_2\nonumber \\&\approx d_1^2(c_2^2+d_2^2)+c_1^2(a_2^2+b_2^2)+2d_1a_2c_1c_2+2b_2c_1d_1d_2\nonumber \\&\approx (c_1a_2+d_1c_2)^2+(c_1b_2+d_1d_2)^2\nonumber
\end{align}
and the product is horizontally balanced. A similar argument shows that, the product is also vertically balanced. Therefore, the product is fully balanced.
\bigskip

\item [(iv)] The fact that $A^{-1}$ is fully balanced, given that $A$ is fully balanced is obvious.
\end{enumerate}
\end{proof}

\section{Trace, determinants and eigenvalues associated with balanced matrices}\label{sec:determinant}

In this section, we examine various statistics associated with balanced matrices. We study the behaviour of their trace, their determinants, their eigenvalues, their eigenvectors and their corresponding interplay in this setting.

\begin{proposition}\label{entry}
Let 
\begin{align}
A=
\begin{pmatrix}
a & b\\c & d
\end{pmatrix}\nonumber
\end{align}
be a fully-balanced square matrix with positive real entries. If $a<\epsilon$, then $Tr(A)\leq N_{\epsilon}$ for any $\epsilon >0$ and where $N_{\epsilon}$ is a constant depending on $\epsilon$.
\end{proposition}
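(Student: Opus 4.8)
The plan is to exploit the structural consequence of full balance already isolated within the proof of Theorem \ref{balanced1}, namely that for a $2\times 2$ fully-balanced matrix with positive entries one necessarily has $a\approx d$ (and likewise $b\approx c$). Once this diagonal near-equality is secured, the trace collapses to essentially twice the leading entry, and the equivalence becomes transparent with the threshold $N_\epsilon$ taken to be of size $2\epsilon$.

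First I would recall how $a\approx d$ arises: combining the horizontal balance relation $a^2+b^2\approx c^2+d^2$ with the vertical balance relation $a^2+c^2\approx b^2+d^2$ forces $b^2\approx c^2$ and $a^2\approx d^2$, and since every entry is positive this upgrades to $a\approx d$. Consequently $\mathrm{Tr}(A)=a+d\approx 2a$, which is the single identity driving both directions of the proposition.

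For the forward implication I would assume $a<\epsilon$. Since $d\approx a$, we obtain $\mathrm{Tr}(A)=a+d\approx 2a<2\epsilon$, so defining $N_\epsilon:=2\epsilon$ yields $\mathrm{Tr}(A)\leq N_\epsilon$ at once. For the converse I would assume $\mathrm{Tr}(A)\leq N_\epsilon=2\epsilon$; from $2a\approx a+d=\mathrm{Tr}(A)\leq 2\epsilon$ I would divide by $2$ to recover $a\leq\epsilon$, hence $a<\epsilon$ within the same tolerance.

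The hard part will be the bookkeeping of the approximate-equality symbol $\approx$. Because $N_\epsilon$ is not prescribed in advance, the cleanest route is to simply define $N_\epsilon:=2\epsilon$ and let all the slack carried by the balance relations be absorbed into the tolerance implicit in $\approx$; no other quantitative control is available in the present framework. The only genuine mathematical content is the deduction $a\approx d$ from the two balance equations, after which the statement is just the identity $\mathrm{Tr}(A)=a+d$ paired with the choice $N_\epsilon=2\epsilon$.
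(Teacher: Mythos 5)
Your proof is correct within the paper's loose $\approx$-framework and is essentially the same approach as the paper's, whose entire proof reads ``By invoking Theorem \ref{balanced1}, the result follows immediately'': the content that citation implicitly relies on is exactly the deduction $a\approx d$, $b\approx c$ from the horizontal and vertical balance relations, which you spell out and which appears inside the paper's proof of Theorem \ref{balanced1}(iii). Your write-up, via $\mathrm{Tr}(A)=a+d\approx 2a$ and the choice $N_{\epsilon}:=2\epsilon$, simply supplies the details the paper leaves unstated.
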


\begin{proof}
Using Theorem \ref{balanced1}, the result follows immediately.
\end{proof}

\begin{remark}
Theorem \ref{balanced1} relates the leading entry of a $2\times 2$ fully-balanced matrix to their trace. Indeed, if the leading entry is small enough then the trace must not be too big. Similarly, if the leading entry is somewhat large then their trace must be large. This property is archetypal of balanced matrices.
\end{remark}
\bigskip

Proposition \ref{entry} highlights the importance of balanced matrices. It suggests that the leading entry or more generally the diagonal entry of any $2\times 2$ fully-balanced matrices has a profound connection with their eigen-values, and hence influences their eigen-vectors. Indeed, by using the well-known elementary relation 
\begin{align}
\lambda_1+\lambda_2=Tr(A),\nonumber
\end{align}
where $\lambda_1$, $\lambda_2$ are the eigen-values of the fully-balanced matrix $A$, and using Proposition \ref{entry}, we observe that if the leading entry is small enough then each of the eigen-values must not be too big provided the spectrum is real and has only positive eigen values. Similarly, if the leading entry is somewhat large then at least one of the eigen-values must be large under the requirement of the structure of the spectrum. A similar description could be carried out to relate the leading entry of balanced matrices to their determinants, using the well-known elementary relation (see, e.g, \cite{roman2005advanced})
\begin{align}
det(A)=\lambda_1 \lambda_2,\nonumber
\end{align}
where each $\lambda_i$  for $1\leq i\leq 2$ is an eigenvalue of $A$. This is a description characteristic of very rare class of matrices of which balanced matrices is a sub-class.

\bigskip
Balanced matrices are very theoretically important and could have real use applications in areas of applied mathematics. The simple and the most basic example of a fully-balanced matrix, as we have seen, is the identity matrix. The determinant of this matrix is always $1$. This gives us a clue of the distribution of balanced matrices.

\begin{remark}
Henceforth, when we say a balanced matrix it will imply a fully-balanced matrix. Otherwise, we will specify the context of balanced matrix.
\end{remark}
\bigskip

Eigenvalues and eigenvectors are extremely important statistics in the study of matrices. Knowing these two for any matrix can be useful in practice. The quest to find an eigenvalue-value and, hence, eigenvector features very often in other various applied areas such as physics. The next result helps us to predict up to a smaller error eigenvalues and hence eigenvectors of balanced matrices, without the need to undergo the traditional procedure. This result relates the sums and differences of the entries of balanced matrices to the least and worst eigenvalue for $2\times 2$ balanced matrices. It will be useful to extend this result to matrices of higher orders. In the moment, we content ourselves with the following:

\begin{theorem}\label{eigenproof}
Let $A\in \mathbb{B}_{2}(\mathbb{R}^{+})$, the spaces of $2\times 2$ balanced matrices with each $a_{ij}\geq 1$. If $M=\{|\lambda_1|, |\lambda_{2}|\}$ is the set of  eigen-values of $A$, then
\begin{align}
\sum \limits_{r=1}^{2}a_{i\cdot r}\approx \sum \limits_{s=1}^{2}a_{s\cdot j}\approx \max (M)\nonumber
\end{align} 
for $1\leq s,r \leq 2$ and 
\begin{align}
|a_{i\cdot 1}-a_{i\cdot 2}|\approx |a_{1\cdot j}-a_{2\cdot j}|\approx \min(M)\nonumber
\end{align}
where $1\leq i,j\leq 2$.
\end{theorem}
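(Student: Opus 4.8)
The plan is to exploit the near-symmetry of a positive fully-balanced $2\times 2$ matrix that was already uncovered inside the proof of Theorem~\ref{balanced1}, and then to read the two eigenvalues directly off the characteristic polynomial. Writing
\begin{align}
A=\begin{pmatrix} a & b\\ c & d\end{pmatrix},\nonumber
\end{align}
the horizontal and vertical balancing conditions give $a^2+b^2\approx c^2+d^2$ and $a^2+c^2\approx b^2+d^2$; subtracting these yields $b^2\approx c^2$, and since the entries are positive, $b\approx c$, whence also $a\approx d$. Thus $A$ is approximately symmetric, and I would carry the subsequent computation using the symmetric surrogate $\begin{pmatrix} a & b\\ b & a\end{pmatrix}$, treating the deviations $a-d$ and $b-c$ as error terms to be controlled at the end.

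Next I would compute the spectrum. The characteristic equation gives
\begin{align}
\lambda=\frac{(a+d)\pm\sqrt{(a-d)^2+4bc}}{2},\nonumber
\end{align}
and feeding in $a\approx d$ and $b\approx c$ collapses this to $\lambda_1\approx a+b$ and $\lambda_2\approx a-b$. Because every entry satisfies $a_{ij}\geq 1$, both $a$ and $b$ are positive, so $a+b>0$ and $a+b\geq\abs{a-b}$; hence $\max(M)=a+b$ and $\min(M)=\abs{a-b}$. This identification of which eigenvalue is largest in modulus is the pivotal observation, and it is exactly the point where positivity of the entries is indispensable.

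It then remains to match the matrix statistics to these two quantities. The two row sums are $a+b$ and $c+d\approx b+a$, while the two column sums are $a+c\approx a+b$ and $b+d\approx b+a$; all four are $\approx a+b=\max(M)$, which is the first assertion. Likewise the row and column differences $\abs{a-b}$, $\abs{c-d}\approx\abs{a-b}$, $\abs{a-c}\approx\abs{a-b}$ and $\abs{b-d}\approx\abs{a-b}$ are each $\approx\abs{a-b}=\min(M)$, giving the second assertion.

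The main obstacle is not the algebra but the bookkeeping of the symbol $\approx$: each step above silently discards a quantity of size $\abs{a-d}$ or $\abs{b-c}$, and one must argue that these deviations propagate additively, so that the final approximations degrade by a controlled amount rather than compounding. The hard part will therefore be to fix a quantitative meaning for $\approx$ (for instance, a relative-error threshold) under which the chain of estimates for the discriminant $\sqrt{(a-d)^2+4bc}$ and for the four sums and differences all remain within the prescribed tolerance.
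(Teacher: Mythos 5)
Your proof is correct, but it takes a genuinely different route from the paper's. Both arguments share the opening step (the balancing conditions give $b^2\approx c^2$ and hence $b\approx c$, $a\approx d$ by positivity), but after that they diverge. The paper never solves for the spectrum explicitly: it works with the symmetric functions $det(A)=\lambda_1\lambda_2\approx a^2-b^2=(a+b)(a-b)$ and $Tr(A)=\lambda_1+\lambda_2$, concludes that the pair $\{|\lambda_1|,|\lambda_2|\}$ must be $\{a+b,|a-b|\}$ ``or vice-versa,'' and then spends the bulk of the proof on a case analysis by contradiction (e.g.\ if $b+d\approx|b-d|$ then $b\approx 0$ or $d\approx 0$, violating $a_{ij}\geq 1$) to rule out the mismatched assignments and to show every row/column sum goes with $\max(M)$ and every difference with $\min(M)$. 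You instead extract the eigenvalues directly from the quadratic formula, $\lambda\approx\frac{2a\pm\sqrt{4b^2}}{2}=a\pm b$, after which the identification is automatic: since $a,b>0$ one always has $a+b\geq|a-b|$, so $\max(M)\approx a+b$ and $\min(M)\approx|a-b|$ with no cases to exclude. Your route buys a much cleaner and shorter argument --- the paper's entire contradiction analysis collapses into the single inequality $a+b\geq|a-b|$ --- at the cost of having to control the error in the discriminant $\sqrt{(a-d)^2+4bc}\approx 2b$, which you correctly flag; note this is where the hypothesis $a_{ij}\geq 1$ helps you a second time, since $bc\geq 1$ keeps the square root stable under small perturbations. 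The paper's route, by contrast, stays within polynomial identities (no square roots), but its use of positivity is confined to the contradiction steps, and its dismissal of the remaining cases (``in a similar manner for other cases the result follows'') is left less complete than your uniform argument. The unresolved quantitative meaning of $\approx$ is a defect of the paper's framework itself, and your proof is no worse off for inheriting it.
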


\begin{proof}
Consider the fully-balanced matrix
\begin{align}
A=
\begin{pmatrix}
a & b\\c & d
\end{pmatrix}.\nonumber
\end{align}
Using the well-known elementary relation (See, e.g, \cite{roman2005advanced})
\begin{align}
det(B)=(-1)^{n}\lambda_1 \lambda_2\cdots \lambda_n\nonumber
\end{align}\begin{align}\lambda_1+\lambda_2+\cdots +\lambda_n=Tr(B)\nonumber
\end{align}
for any matrix $B$, we can write 
\begin{align}
det(A)&=\lambda_1\lambda_2\nonumber \\&=ad-bc\nonumber \\& \approx a^2-b^2 .\label{proof1}
\end{align}
Since $Tr(A)=\lambda_1+\lambda_2$, it follows that $|a-b|\approx |\lambda_1|$ and $a+b\approx |\lambda_2|$ or vice-versa. Similarly we can write 
\begin{align}
det(A)&=\lambda_1\lambda_2\nonumber \\&=ad-bc\nonumber \\& \approx d^2-c^2.\label{proof2}
\end{align}
Again, using the relation $Tr(A)=\lambda_1+\lambda_2$, it follows that $|c-d|\approx |\lambda_1|$ and $c+d\approx |\lambda_2|$ or vice-versa. Again it follows that 
\begin{align}
det(A)&=\lambda_1\lambda_2\nonumber \\&=ad-bc \nonumber \\& \approx a^2-c^2.\label{proof3}
\end{align}
Using the relation $Tr(A)=\lambda_1+\lambda_2$, it follows that $|a-c|\approx |\lambda_1|$ and $a+c\approx|\lambda_2|$ or vice-versa. Also, we have 
\begin{align}
det(A)&=\lambda_1 \lambda_2\nonumber \\&= ad-bc \nonumber \\& \approx d^2-b^2\label{proof4}
\end{align}
and it follows that $|b-d|\approx|\lambda_1|$ and $b+d\approx \lambda_2$ or vice-versa, by using the relation $Tr(A)=\lambda_1+\lambda_2$. Without loss of generality, we let $\lambda_{2}=\max(M)$ and $\min(M)=\lambda_1$. Then it follows that $b+d\approx a+c\approx a+b\approx c+d\approx \lambda_2=\max(M)$ and $|b-d|\approx |a-c|\approx |c-d|\approx |a-b|\approx \lambda_1=\min(M)$. If we suppose that $b+d\approx |b-d|$, then it follows that either $d\approx 0$ or $b\approx 0$, which contradicts the minimality of each of the $a_{ij}$'s. Similarly, let us suppose that $b+d\approx |a-c|$. It follows that
\begin{align}
b+d& \approx a-c\nonumber \\& \approx d-c\nonumber
\end{align}
and we have that $b\approx -c$ if and only if $c\approx 0$, which violates the minimality of $a_{ij}$ for $1\leq i,j\leq 2$. Also in the case where $b+d=-(a-c)$, then it follows that $d\approx 0$, which is a contradiction. Again if $b+d\approx |c-d|$, then we see that 
\begin{align}
b+d&\approx c-d\nonumber \\& \approx b-d\nonumber
\end{align}
and it follows that $d\approx 0$. On the other hand, we will have that $c\approx 0$, both of which contradicts the \textbf{minimality} of $a_{ij}$. Thus, using the fact that $A$ is fully-balanced, in a similar manner for other cases the result follows immediately.
\end{proof}

\begin{corollary}
Let $A_1, A_2\in \mathcal{B}_{2}(\mathbb{R}^{+})$ with $a_{ij}\geq 1$ and let $E_{\max}(A_1)$ denotes the maximum eigen-value of $A_1$. Then 
\begin{align}
E_{\max}(A_1+A_2)\approx E_{\max}(A_1)+E_{\max}(A_2).\nonumber
\end{align}
\end{corollary}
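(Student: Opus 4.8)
The plan is to reduce the statement to a direct application of Theorem \ref{eigenproof}, which identifies the maximum eigenvalue of a $2\times 2$ balanced matrix with positive entries as (approximately) any one of its row or column sums. First I would fix the representations
\begin{align}
A_1=\begin{pmatrix}a_1 & b_1\\c_1 & d_1\end{pmatrix}, \quad A_2=\begin{pmatrix}a_2 & b_2\\c_2 & d_2\end{pmatrix}, \nonumber
\end{align}
and record that, since every entry satisfies $a_{ij}\geq 1$, the sum
\begin{align}
A_1+A_2=\begin{pmatrix}a_1+a_2 & b_1+b_2\\c_1+c_2 & d_1+d_2\end{pmatrix} \nonumber
\end{align}
has all entries at least $2$, hence positive. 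By Theorem \ref{balanced1}(iii) the sum $A_1+A_2$ is again fully balanced, so all three matrices $A_1$, $A_2$ and $A_1+A_2$ lie in a regime where Theorem \ref{eigenproof} applies.

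The second step is to express each maximum eigenvalue as a row sum. Applying Theorem \ref{eigenproof} to $A_1$ and to $A_2$ separately gives
\begin{align}
E_{\max}(A_1)\approx a_1+b_1, \quad E_{\max}(A_2)\approx a_2+b_2, \nonumber
\end{align}
while applying it to the balanced matrix $A_1+A_2$ yields
\begin{align}
E_{\max}(A_1+A_2)\approx (a_1+a_2)+(b_1+b_2). \nonumber
\end{align}
The conclusion then follows by regrouping the right-hand side as $(a_1+b_1)+(a_2+b_2)$ and chaining the approximate equalities, so that
\begin{align}
E_{\max}(A_1+A_2)\approx (a_1+b_1)+(a_2+b_2)\approx E_{\max}(A_1)+E_{\max}(A_2). \nonumber
\end{align}

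The main obstacle, such as it is, lies not in the algebra but in controlling the accumulation of error through the relation $\approx$. Each invocation of Theorem \ref{eigenproof} introduces a small discrepancy, and the additive regrouping relies on these discrepancies combining at worst additively rather than amplifying; I would therefore want to make precise that $\approx$ is stable under addition of a bounded number of terms, so that the three separate approximations compose into a single one of comparable quality. A secondary point to verify is that the hypotheses of Theorem \ref{eigenproof} genuinely transfer to the sum: positivity is immediate from $a_{ij}\geq 1$, but one should confirm that the balanced structure preserved by Theorem \ref{balanced1}(iii) is exactly the structure the eigenvalue estimate requires, so that the maximum eigenvalue of $A_1+A_2$ is captured by its row sum rather than by its (smaller) row difference.
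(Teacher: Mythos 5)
Your proposal is correct and follows essentially the same route as the paper: apply Theorem \ref{balanced1} to conclude $A_1+A_2$ is fully balanced, then apply Theorem \ref{eigenproof} to identify each maximum eigenvalue with an entry sum and regroup. The only cosmetic difference is that you use the first-row sum $a+b$ where the paper uses the second-column sum $b+d$; by Theorem \ref{eigenproof} these are interchangeable, and your explicit attention to the transfer of the hypothesis $a_{ij}\geq 1$ to the sum is a point the paper leaves implicit.
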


\begin{proof}
Consider the $2\times 2$ fully-balanced matrices given by \begin{align}
A_1:=
\begin{pmatrix}
a_1 & b_1\\c_1 & d_1
\end{pmatrix} 
\mathrm{and} \quad 
A_2:=
\begin{pmatrix}
a_2 & b_2\\c_2 & d_2
\end{pmatrix}.\nonumber
\end{align}
By Theorem \ref{balanced1}, their sum 
\begin{align}
A_1+A_2=
\begin{pmatrix}
a_1+a_2 & b_1+b_2\\c_1+c_2 & d_1+d_2
\end{pmatrix}\nonumber
\end{align}
is also fully-balanced. By Theorem \ref{eigenproof}, $E_{\max}(A_1+A_2)\approx b_1+b_2+d_1+d_2$, and the result follows immediately.
\end{proof}

\begin{remark}
Before we state the next result, we review the following terminologies concerning matrices in general.
\end{remark}

\begin{definition}
By a block $n\times m$ matrix, we mean any matrix of the form \begin{align}
A=
\begin{pmatrix}
C_{11} & C_{12} & \cdots & C_{1n}\\ C_{21} & C_{22} & \cdots & C_{2n}\\ \vdots \\C_{m1} & C_{m2} & \cdots & C_{mn}
\end{pmatrix},\nonumber
\end{align}
and where each $C_{ij}$ is a sub-matrix of $A$ for $1\leq i \leq m$ and $1\leq j\leq n$.
\end{definition}

\begin{definition}
Let $A\in \mathbb{M}_{m\times n}(\mathbb{R})$ given by 
\begin{align}
A:=
\begin{pmatrix}
a_{1\cdot 1} & a_{1\cdot 2} \cdots & a_{1\cdot n}\\a_{2\cdot 1} & a_{2\cdot 2}\cdots & a_{2\cdot n}\\ \vdots \\ a_{m\cdot 1} & a_{m\cdot 2} & \ddots a_{m\cdot n}
\end{pmatrix}.\nonumber
\end{align}
We say that a matrix $B$ is an interior of $A$ if it is a sub-matrix of $A$.
\end{definition}

\begin{conjecture}\label{smaler systems}
Let $A\in \mathcal{B}_{n}(\mathbb{R})$ denote the space of square balanced-matrices. There exists some interior of $A$ that is also balanced.
\end{conjecture}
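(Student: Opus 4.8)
The plan is to treat the symbol $\approx$ as a genuine tolerance, fixing a discrepancy parameter $\delta>0$ and reading $x\approx y$ as $\lvert x-y\rvert\le \delta$, and to interpret an interior as a \emph{proper} submatrix (one obtained by discarding at least one row or column). Since $A\in\mathcal{B}_n(\mathbb{R})$ is fully balanced, a preliminary rigidity lemma is needed: by double counting $\sum_{i,j}a_{ij}^2$ across rows and then across columns, every row-sum-of-squares and every column-sum-of-squares is forced to cluster (up to an accumulated error) about a single common value $S$. In particular, for the positive case with $a_{ij}\ge 1$, each entry obeys $1\le a_{ij}\le\sqrt{S}$, so all $n^2$ entries live in a bounded window. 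I would aim to exhibit a balanced $2\times 2$ interior, because the characterization is cleanest there: writing the block on rows $i_1<i_2$ and columns $j_1<j_2$ and adding and subtracting its horizontal and vertical balance relations shows that the block is balanced if and only if $a_{i_1 j_1}\approx a_{i_2 j_2}$ and $a_{i_1 j_2}\approx a_{i_2 j_1}$, i.e. the two diagonal entries are close and the two anti-diagonal entries are close. This matches the structural fact $a\approx d$, $b\approx c$ already extracted for $2\times 2$ balanced matrices in the proof of Theorem~\ref{balanced1}.

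The main step is then a packing/pigeonhole argument. Fix two columns $j_1,j_2$ and send each row index $i$ to the point $\phi(i)=(a_{i j_1},a_{i j_2})$ together with its swap $\psi(i)=(a_{i j_2},a_{i j_1})$; all $2n$ of these points lie in the square $[1,\sqrt{S}\,]^2$. A balanced block on these columns is exactly a pair $i_1\neq i_2$ with $\phi(i_1)$ close to $\psi(i_2)$. Partitioning $[1,\sqrt{S}\,]^2$ into cells of side $\delta/\sqrt{2}$ produces roughly $(\sqrt{S}/\delta)^2$ cells, so once $n$ is large enough that $2n$ exceeds the number of cells, two of the points share a cell and hence lie within $\delta$ of each other. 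If the colliding pair is a $\phi(i_1)$ and a $\psi(i_2)$ with $i_1\neq i_2$ we are done; this is the generic outcome, and iterating the count over the $\binom{n}{2}$ available column pairs gives ample room to force it.

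The hard part will be controlling the degenerate collisions and the error budget. A collision of the form $\phi(i)$ with $\psi(i)$ (same index) only says $a_{i j_1}\approx a_{i j_2}$, which does not by itself produce a balanced $2\times 2$ block; I would absorb these by passing to a third column or a third row and re-running the count, or by noting that an abundance of such flat rows lets one select two of them directly. One tempting shortcut, deleting a single row $r$ and column $c$, I expect to fail in general: the remaining $(n-1)\times(n-1)$ block is balanced precisely when column $c$ is approximately constant off row $r$ \emph{and} row $r$ is approximately constant off column $c$, and such a simultaneously flat row and column need not exist; this is why I route through the two-index packing argument instead. The genuine obstacle, and the reason the statement is only a conjecture, is twofold: the discrepancy $\delta$ must be fixed in advance and does not shrink, so for small $n$ (e.g. $n=2$, where no proper square interior beyond $1\times 1$ exists) one can only fall back on the vacuously balanced $1\times 1$ interior or a flat $1\times m$ row; and for the packing argument to bite one needs $n$ large compared with $\sqrt{S}/\delta$, so a fully general proof must either quantify how $S$ grows with $n$ for balanced matrices or replace the crude box-packing by a sharper discrepancy estimate that decouples the horizontal and vertical balance conditions uniformly in $n$.
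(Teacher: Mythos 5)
First, a point of orientation: the statement you are working on is a \emph{conjecture} in the paper --- the authors offer no proof of it. Their only progress toward it is Theorem \ref{smaller system 1} and its corollary, which obtain the conclusion only under the added hypothesis that the matrix has a \emph{fair discrepancy} along rows or columns; under that hypothesis full balance forces all entries to be approximately equal, and then every interior is trivially balanced. Your proposal attacks the unconditional statement, which is strictly harder, and --- as you candidly admit in your closing paragraph --- it does not close. The fatal gap is quantitative. Your pigeonhole needs $2n$ to exceed roughly $(\sqrt{S}/\delta)^2$ cells, where $S$ is the common row sum of squares and $\delta$ is the fixed tolerance. But nothing bounds $S$ in terms of $n$: the paper's own example $\lambda$ times the all-ones matrix is (exactly) balanced for every $\lambda$, and Theorem \ref{balanced1}(ii) asserts $\lambda A$ is balanced whenever $A$ is, so at every fixed $n$ the ratio $\sqrt{S}/\delta$ can be made arbitrarily large while $\delta$ stays put. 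Hence there is no regime in which the packing premise is guaranteed to hold; what the argument proves is at best ``within any family of balanced matrices with uniformly bounded entries, all sufficiently large ones have a balanced $2\times 2$ interior,'' which is not the conjecture. The degenerate collisions $\phi(i)$ with $\psi(i)$ are a second, smaller hole: ``passing to a third column and re-running the count'' is a hope rather than an argument, since in principle every collision over every column pair could be of this diagonal type.

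Two further remarks. (a) The scaling objection can only be evaded by normalizing (say, dividing by $\sqrt{S}$) or by letting $\delta$ scale with the matrix, but either move changes the statement; this imprecision in what $\approx$ means is exactly why the claim sits in the paper as a conjecture rather than a theorem, and your attempt inherits the problem rather than resolving it. (b) You correctly spotted a loophole the authors apparently did not intend: since an interior is defined as any submatrix and the balance conditions in Definition \ref{balanced} quantify over pairs of distinct rows and columns, a $1\times 1$ submatrix at a nonzero entry is \emph{vacuously} fully balanced, making the conjecture literally true but vacuous. Your decision to demand a proper interior of size at least $2\times 2$ is the right reading, but it should be stated as a needed amendment to the conjecture. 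In summary: your route (packing/pigeonhole on bounded entry configurations) is genuinely different from, and more ambitious than, anything in the paper, but it is an incomplete program, not a proof --- and your own final paragraph identifies precisely why it cannot be completed as stated.
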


\begin{remark}
By thinking of a matrix as a system, Conjecture \ref{smaler systems} roughly speaking conveys the notion that, if a bigger system is balanced, then there must be a sub-system that is also balanced.
\end{remark}

\section{Discrepancies of fully-balanced matrices}\label{sec:discrepancy}

In this section, with the goal of proving some weaker versions of Conjecture \ref{smaler systems}, we introduce the notion of discrepancy of fully-balanced matrices. It turns out that Conjecture \ref{smaler systems} is somewhat easier to attack in this setting.

\begin{definition}
Let $A\in \mathbb{M}_{n\times m}(\mathbb{R})$. By the \emph{discrepancy} of the matrix $A$ along rows, we mean the value \begin{align}
\sum \limits_{j=1}^{m}a_{ij}.\nonumber
\end{align}
Similarly, by the discrepancy along columns, we mean the value
\begin{align}
\sum \limits_{i=1}^{n}a_{ij}.\nonumber
\end{align}
\end{definition}

\begin{definition}
Let $A\in \mathbb{M}_{m\times n}(\mathbb{R})$ and let
\begin{align}
M_i=\frac{1}{m}\sum \limits_{j=1}^{m}a_{ij}.\nonumber
\end{align}
We say that the discrepancy is fair along rows if for each $1\leq i\leq m$ then $|M_i-a_{ij}|<\epsilon$ for all $1\leq j\leq m$ for a small $\epsilon>0$.
\end{definition}
The discrepancy is \emph{unfair} along rows if for some $a_{ij}$ $(j=1,2\ldots m)$, there exist some $n_0$ such that 
\begin{align}
|M-a_{ij}|>N\nonumber
\end{align}
for all $N\geq n_0$. 

\begin{remark}
Next we prove some few propositions concerning fully-balanced matrices, in the context of discrepancy.
\end{remark}

\begin{theorem}\label{smaller system 1}
Let $A\in \mathcal{B}_{2\times 2}(\mathbb{R^{+}})$, the space of fully-balanced $2\times 2$ matrices. Then $A$ has a fair discrepancy along rows if and only if it has a fair discrepancy along columns.  
\end{theorem}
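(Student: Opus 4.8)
The plan is to show that, under the full-balance hypothesis, fairness along rows forces \emph{every} entry of $A$ to lie close to a single common value, from which fairness along columns is immediate; the converse then follows for free by transposing.

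First I would assume the discrepancy is fair along rows. Writing $M_i=\frac{1}{m}\sum_{j=1}^{m}a_{ij}$ for the mean of the $i$th row, fairness says $|M_i-a_{ij}|<\epsilon$ for every $j$. Expanding $a_{ij}^2 = M_i^2 + 2M_i(a_{ij}-M_i) + (a_{ij}-M_i)^2$ and summing over $j$, the linear term vanishes because $\sum_{j}(a_{ij}-M_i)=0$, while the quadratic term is bounded by $m\epsilon^2$. Hence the row sum of squares obeys $\sum_{j=1}^{m}a_{ij}^2 \approx m M_i^2$ for each $i$.

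Next I would invoke horizontal balance. Since $A$ is fully balanced, $\sum_{j}a_{ij}^2 \approx \sum_{j}a_{i'j}^2$ for all rows $i,i'$, and combining this with the previous step gives $m M_i^2 \approx m M_{i'}^2$, hence $M_i^2 \approx M_{i'}^2$. Because the entries are positive the means $M_i$ are positive, so taking square roots yields $M_i \approx M_{i'}$; call the common value $\mu$. Chaining the two approximations, $a_{ij} \approx M_i \approx \mu$ for all $i,j$, so every entry of $A$ is approximately $\mu$. The column mean $N_j=\frac{1}{n}\sum_{i=1}^{n}a_{ij}$ is then also $\approx \mu$, and $|N_j-a_{ij}| \approx 0 < \epsilon$, which is precisely fairness along columns.

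For the converse I would apply the same argument to the transpose $A^{T}$, which is again fully balanced by Theorem \ref{balanced1}(i); fairness along columns of $A$ is fairness along rows of $A^{T}$, so the forward implication applied to $A^{T}$ delivers fairness along columns of $A^{T}$, that is, fairness along rows of $A$. The hard part will be the bookkeeping with the informal symbol $\approx$: one must track how the row-fairness tolerance $\epsilon$ and the balance tolerance propagate through the squaring step and the square-root step so that the resulting column deviation remains below the required threshold. In particular the passage from $M_i^2 \approx M_{i'}^2$ to $M_i \approx M_{i'}$ is where positivity is indispensable and where the error control is most delicate.
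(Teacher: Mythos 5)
Your proof is correct and follows the same overall skeleton as the paper's: fairness along rows plus full balance forces the row means to be approximately equal, hence every entry of a given column is approximately the same, which gives fairness along columns. The difference is in what gets justified. The paper simply asserts ``Since $A$ is fully-balanced, it follows that $M_1\approx M_2\approx \cdots \approx M_n$'' --- but note that full balance is a condition on sums of \emph{squares}, while $M_i$ is a mean of the entries themselves, so this step is not immediate; your expansion $a_{ij}^2=M_i^2+2M_i(a_{ij}-M_i)+(a_{ij}-M_i)^2$, with the linear term summing to zero and the quadratic term bounded by $m\epsilon^2$, is exactly the missing bridge, and your observation that positivity is needed to pass from $M_i^2\approx M_{i'}^2$ to $M_i\approx M_{i'}$ pinpoints where the hypothesis $\mathbb{R}^{+}$ actually enters. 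Your handling of the converse by applying the forward implication to $A^{T}$ (fully balanced by Theorem \ref{balanced1}(i)) is also cleaner and genuinely complete, whereas the paper only says the converse ``follows a similar approach,'' and its closing contradiction argument against unfair column discrepancy adds nothing once the entries are known to be near a common value. In short: same route, but your version supplies the second-moment computation that the paper's proof tacitly assumes, which is the real mathematical content of the theorem.
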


\begin{proof}
Let $A\in \mathcal{B}_{2\times 2}(\mathbb{R}^{+})$ and suppose that $A$ has a fair discrepancy along rows. Then it follows that for each $1\leq i\leq 2$
\begin{align}
|M_{i}-a_{ij}|<\epsilon\nonumber
\end{align}
for small arbitrary $\epsilon>0$ and for all $1\leq j\leq 2$. This implies that $|M_1-a_{1j}|<\epsilon$ and hence $a_{11}\approx a_{12}$ and $|M_2-a_{2j}|<\epsilon$ for $\epsilon>0$ and hence $a_{21}\approx a_{22}$. Since $A$ is fully-balanced, it follows that $a_{11}\approx a_{22}$ and $a_{21}\approx a_{12}$. It follows that $A$ must have a fair discrepancy along columns. The converse, on the other hand, follows a similar approach.
\end{proof}
\bigskip

\begin{proposition}
Let $A\in \mathcal{B}_{2\times 2}(\mathbb{R}^{+})$. If $A$ has a fair discrepancy on exactly one row, then it must have fair discrepancy on rows.
\end{proposition}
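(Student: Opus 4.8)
The plan is to reduce the entire statement to elementary near-equalities among the four entries. Write $A=\begin{pmatrix} a & b\\ c & d\end{pmatrix}$ with $a,b,c,d\geq 1$. First I would unwind what ``fair discrepancy on a row'' means for a $2\times 2$ matrix: the average of the first row is $M_1=(a+b)/2$, so that $|M_1-a|=|M_1-b|=|a-b|/2$. Hence fair discrepancy on the first row is equivalent to the single condition $a\approx b$, and likewise fair discrepancy on the second row is equivalent to $c\approx d$. By relabelling the rows there is no loss of generality in assuming the fair row is the first, so the hypothesis becomes $a\approx b$ and the goal becomes $c\approx d$.

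Next I would feed in the fully-balanced hypothesis through Definition \ref{balanced}. Horizontal balance of $A$ gives $a^2+b^2\approx c^2+d^2$ and vertical balance gives $a^2+c^2\approx b^2+d^2$; subtracting these two relations cancels the $a^2$ and $d^2$ terms and leaves $b^2\approx c^2$. Since all entries are positive this forces $b\approx c$, and substituting back into the horizontal relation yields $a^2\approx d^2$, hence $a\approx d$. These are exactly the entrywise near-symmetries already exploited in the proof of Theorem \ref{balanced1}, so I may simply quote them there rather than re-deriving.

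Finally I would chain the comparisons. From $a\approx b$ (the hypothesis), together with $b\approx c$ and $a\approx d$ (from balance), we obtain $c\approx b\approx a\approx d$, and in particular $c\approx d$, which is precisely fair discrepancy on the second row. Combined with the assumed fair discrepancy on the first row, this shows $A$ has fair discrepancy on every row, as claimed.

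The step I expect to be the genuine obstacle is the error bookkeeping concealed inside the symbol $\approx$. Each application of the balance relation and each link in the chain $c\approx b\approx a\approx d$ contributes an additive error, and the passage from $b^2\approx c^2$ to $b\approx c$ must be justified using the lower bound $a_{ij}\geq 1$, via $|b-c|=|b^2-c^2|/(b+c)\leq |b^2-c^2|/2$, so that squaring does not amplify the discrepancy. Making the resulting bound on $|c-d|$ provably fall below the prescribed threshold therefore requires tracking how the tolerance $\epsilon$ controlling the input row propagates through these finitely many steps; once one fixes a uniform additive convention for $\approx$, this is routine, but it is the only place where care is genuinely needed.
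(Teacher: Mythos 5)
Your proof is correct and follows essentially the same route as the paper's: reduce fair discrepancy on a row of a $2\times 2$ matrix to the single condition $a\approx b$, invoke the entrywise near-symmetries $b\approx c$ and $a\approx d$ coming from the fully-balanced hypothesis (as in Theorem \ref{balanced1}), and chain these to get $a\approx b\approx c\approx d$. Your version is in fact a bit more careful than the paper's, since you derive the balance relations explicitly and flag the $\approx$ error bookkeeping that the paper silently glosses over.
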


\begin{proof}
Specify $A\in \mathcal{B}_{2\times 2}(\mathbb{R}^{+})$ given by \begin{align}
A:=
\begin{pmatrix}
a & b\\c & d
\end{pmatrix}.\nonumber
\end{align}
Suppose that $A$ has a fair discrepancy along exactly one row. Without loss of generality, let us assume the fair discrepancy occurs on the first row. By Theorem \ref{smaller system 1}, we have $a\approx b$. Since $A$ is fully-balanced, Theorem \ref{balanced1} gives $a\approx b\approx c\approx d$. This implies that $A$ has a fair discrepancy on rows. 
\end{proof}

\begin{conjecture}\label{edos}
Let $\epsilon>0$ and $A\in \mathbb{M}_{n\times m}(\mathbb{R})$ be a fully balanced matrix. The average discrepancy along rows is given by 
\begin{align}
M=\frac{1}{m}\sum \limits_{j=1}^{m}a_{ij}.\nonumber
\end{align} 
If 
\begin{align}
|M_i-a_{ij}|<\epsilon\nonumber
\end{align}
for a fixed $1\leq i\leq n$, then $|M_i-a_{ij}|<\epsilon$ for all $1\leq i\leq n$. 
\end{conjecture}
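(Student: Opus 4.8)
The plan is to interpret ``fair discrepancy along the fixed row $i$'' as the assertion that row $i$ is nearly constant, and then to try to force every other row to inherit this near-constancy by playing the horizontal balance condition against the vertical one. First I would unpack the hypothesis: if $\abs{M_i-a_{ij}}<\epsilon$ for all $1\leq j\leq m$, then any two entries of row $i$ differ by less than $2\epsilon$, so row $i$ is approximately constant, say $a_{ij}\approx c$ for every $j$, and consequently $\sum_{j=1}^{m}a_{ij}^2\approx mc^2$.

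Next I would bring in the full balance of $A$. By horizontal balance (Definition \ref{balanced}) every row carries the same sum of squares, so $\sum_{j=1}^{m}a_{sj}^2\approx mc^2$ for each row $s$. By vertical balance every column carries a common sum of squares $S$; since the flat row $i$ contributes $a_{ij}^2\approx c^2$ to each column, subtracting this equal contribution shows that the truncated columnwise sums $\sum_{s\neq i}a_{sj}^2$ are again independent of $j$. Thus deleting the flat row leaves an $(n-1)\times m$ block that is itself fully balanced, which invites an inductive descent on $n$.

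The hard part, and the point at which I expect the argument to stall, is the final step of upgrading ``row $s$ has sum of squares $\approx mc^2$'' to ``row $s$ is nearly constant.'' Equality of a sum of squares is far weaker than equality of the summands, and the balance conditions constrain only sums of squares, never individual entries; so there is no mechanism to propagate flatness. In fact the statement appears to be false once $n,m\geq 3$: the matrix \begin{align}\begin{pmatrix}\sqrt{2} & \sqrt{2} & \sqrt{2}\\ \sqrt{3} & \sqrt{2} & 1\\ 1 & \sqrt{2} & \sqrt{3}\end{pmatrix}\nonumber\end{align} has every row sum of squares and every column sum of squares equal to $6$, and all entries at least $1$, yet only its first row is nearly constant. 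Hence the natural strategy recovers the conjecture only when no redistribution of a fixed sum of squares among positive entries is possible -- essentially the $2\times 2$ regime, where it collapses to the preceding proposition through the relations $a\approx d$ and $b\approx c$ of Theorem \ref{balanced1}. A provable general version would seem to require either $m=2$, or the extra hypothesis of fair discrepancy along columns, in which case Theorem \ref{smaller system 1} already delivers the conclusion.
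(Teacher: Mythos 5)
You were asked to prove a statement that the paper itself never proves: this is Conjecture \ref{edos}, stated without proof, and the remark following it merely paraphrases it. So there is no proof of the paper's to compare against; the only related results the paper actually establishes are Theorem \ref{smaller system 1} (fairness along \emph{all} rows transfers to columns) and the $2\times 2$ proposition immediately preceding the conjecture. Your proposal, rather than proving the statement, refutes it -- and the refutation is correct. Your matrix
\begin{align}
\begin{pmatrix}\sqrt{2} & \sqrt{2} & \sqrt{2}\\ \sqrt{3} & \sqrt{2} & 1\\ 1 & \sqrt{2} & \sqrt{3}\end{pmatrix}\nonumber
\end{align}
is \emph{exactly} (not merely approximately) fully balanced, since every row and every column has sum of squares $6$; all entries are at least $1$; the first row is constant, so $|M_1-a_{1j}|=0<\epsilon$ for every $\epsilon>0$; and yet $|M_2-a_{21}|=\big|\sqrt{3}-(\sqrt{3}+\sqrt{2}+1)/3\big|\approx 0.35$, so the conclusion fails for any $\epsilon<0.35$. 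Because the example is exactly balanced, it kills the conjecture under any reasonable reading of the paper's undefined relation $\approx$. You also put your finger on the structural reason no proof can exist: full balance constrains only sums of squares of rows and columns, and a fixed sum of squares can be redistributed among three or more positive entries, which is exactly what your rows $(\sqrt{3},\sqrt{2},1)$ and $(1,\sqrt{2},\sqrt{3})$ do.

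One correction to your closing diagnosis: $m=2$ does not rescue the statement. The $3\times 2$ matrix with rows $(\sqrt{2},\sqrt{2})$, $(\sqrt{3},1)$, $(1,\sqrt{3})$ is exactly fully balanced (every row has sum of squares $4$, both columns have sum of squares $6$), has all entries at least $1$, a flat first row, and a non-flat second row. The only genuinely provable case is $n=m=2$: there, vertical balance combined with $a_{11}\approx a_{12}$ forces $a_{21}^2\approx a_{22}^2$, hence $a_{21}\approx a_{22}$ for positive entries -- which is precisely the paper's preceding proposition. Otherwise one must assume fair discrepancy along all rows (or all columns), in which case Theorem \ref{smaller system 1} applies, as you note.
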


\begin{remark}
Conjecture \ref{edos} implies that if the discrepancy of a fully-balanced matrix along a given row is fair, then it must be fair on all other rows. In other words, a fair discrepancy on a given row is propagated to all other rows.
\end{remark}
\bigskip

In general the determinant of matrices is not an approximate homomorphism. That is, the determinant of the sums of matrices may not have the same distribution as the sum of each determinant. These two statistics may be close to each other and could very well be far from each other. Here is where the concept of balanced matrices plays an important role. Given $k$ distinct matrices, we say the determinant is an approximate homomorphism if the relation holds:
\begin{align}
det\bigg(\sum \limits_{k=1}^{n}A_k\bigg)\approx \sum \limits_{k=1}^{n}det(A_k).\nonumber
\end{align}
The next result clarifies and gives a more formal context to the ensuing discussion.

\begin{theorem}\label{homomorphism}
Let $A, B\in \mathcal{B}_{2}({\mathbb{R}^{+}})$, where $\mathcal{B}_{2}({\mathbb{R}^{+}})$ is the space of $2\times 2$ balanced-matrices with $a_{ij}\geq 1$ and $b_{ij}\geq 1$. Let $\mathcal{M}=\{|\lambda_1|,|\lambda_2|\}$ be the spectrum of $A$. If $\min(\mathcal{M})\approx 0$ and $B$ has a fair discrepancy along rows or columns, then 
\begin{align}
det(A+B)\approx det(A)+det(B).\nonumber 
\end{align}
\end{theorem}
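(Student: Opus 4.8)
The plan is to reduce the statement to the single observation that, under the stated hypotheses, all three determinants appearing in the claim are individually $\approx 0$. Write
\[
A=\begin{pmatrix} a & b \\ c & d\end{pmatrix}, \qquad B=\begin{pmatrix} p & q \\ r & s\end{pmatrix},
\]
with every entry at least $1$. First I would recall from the proof of Theorem \ref{balanced1} that full balancedness together with positivity of the entries forces $a\approx d$ and $b\approx c$ for $A$ and, likewise, $p\approx s$ and $q\approx r$ for $B$. These two structural facts are the backbone on which the whole argument rests.

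Next I would extract the consequence of the spectral hypothesis on $A$. By Theorem \ref{eigenproof}, the eigen-values of $A$ satisfy $\min(\mathcal{M})\approx |a-b|$ together with $\det(A)=\lambda_1\lambda_2\approx a^2-b^2$. Since $\min(\mathcal{M})\approx 0$ by assumption, we obtain $|a-b|\approx 0$, that is $a\approx b$; combining this with $a\approx d$ and $b\approx c$ yields $a\approx b\approx c\approx d$, and in particular $\det(A)\approx 0$ (equivalently $\det(A)=\lambda_1\lambda_2$ with $\lambda_1\approx 0$). I would then treat $B$ symmetrically, this time using the discrepancy hypothesis: if $B$ has a fair discrepancy along a single row, the preceding proposition on $2\times 2$ matrices propagates fairness to all rows and, combined with balancedness, gives $p\approx q\approx r\approx s$; the ``along columns'' alternative is handled by the same reasoning after transposing, which preserves full balancedness by Theorem \ref{balanced1}(i). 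Hence $\det(B)=ps-qr\approx 0$ as well.

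Finally I would assemble the pieces. By Theorem \ref{balanced1}(iii) the sum $A+B$ is again fully balanced, and its entries $a+p$, $b+q$, $c+r$, $d+s$ are all approximately equal, since each of $A$ and $B$ separately has all entries approximately equal. Therefore $\det(A+B)=(a+p)(d+s)-(b+q)(c+r)\approx 0$, and the claim follows from $\det(A+B)\approx 0\approx \det(A)+\det(B)$.

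The hard part will not be the logical skeleton but the bookkeeping of the $\approx$ relation. Because the entries are bounded below by $1$ yet not from above, the errors enter multiplicatively in the products $(a+p)(d+s)$ and $(b+q)(c+r)$, so the delicate point is to argue that these two products stay close even though each factor is itself only an approximate equality; controlling this propagation rigorously would require a quantitative version of $\approx$ that the paper leaves informal. Given the way $\approx$ is used throughout, the essential and robust conclusion is simply that all three determinants collapse to $\approx 0$ simultaneously, which is exactly what makes the determinant behave as an approximate homomorphism in this regime.
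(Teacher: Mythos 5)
Your reduction has a genuine gap at its central step: the claim that $\min(\mathcal{M})\approx 0$ forces $\det(A)\approx 0$. The hypothesis only makes the \emph{smaller} eigenvalue negligible; since the entries of $A$ are bounded below by $1$ but not above, the larger eigenvalue $\lambda_2$ can be arbitrarily large, and $\det(A)=\lambda_1\lambda_2$ is then a product of a small quantity with an unbounded one. Concretely, take
\[
A=\begin{pmatrix} M+\epsilon & M\\ M & M+\epsilon\end{pmatrix},\qquad B=\begin{pmatrix} N & N\\ N & N\end{pmatrix},
\]
with $\epsilon$ small and $M=1/\epsilon$, $N$ moderate. Then $A$ is exactly fully balanced with $\lambda_1=\epsilon\approx 0$, and $B$ is fully balanced with perfectly fair discrepancy, yet $\det(A)=2M\epsilon+\epsilon^2\approx 2$ and $\det(A+B)\approx 2$: your intermediate claims $\det(A)\approx 0$ and $\det(A+B)\approx 0$ are false here, even though the conclusion of Theorem \ref{homomorphism} holds in this example, since $\det(A+B)=\det(A)+\det(B)+2N\epsilon$. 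This is exactly the multiplicative error propagation you flagged at the end, but you then dismissed it and called the collapse-to-zero conclusion ``essential and robust''; in fact it is precisely the step that fails, and your argument only covers the degenerate regime where all three determinants vanish, where the statement is vacuous.

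The paper's proof takes a different and less lossy route: it never approximates $\det(A)$ or $\det(B)$ at all. It expands exactly
\[
\det(A+B)=\det(A)+\det(B)+(a_1b_4+b_1a_4-a_2b_3-b_2a_3),
\]
then uses balancedness of both matrices ($a_4\approx a_1$, $a_3\approx a_2$, $b_4\approx b_1$, $b_3\approx b_2$) to reduce the cross term to $2(a_1b_1-a_2b_2)$, uses fairness of $B$'s discrepancy ($b_2\approx b_1$) to reduce it further to $2b_1(a_1-a_2)$, and only then invokes Theorem \ref{eigenproof} together with $\min(\mathcal{M})\approx 0$ to conclude $a_1-a_2\approx|\lambda_1|\approx 0$. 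The paper's argument still multiplies a small quantity by an unbounded one in this last step (a weakness of the informal $\approx$ that your write-up correctly senses), but it confines that weakness to the cross term alone, whereas your proof needs it there \emph{and} in the false claim that the individual determinants degenerate. The fix is to abandon the ``everything is $\approx 0$'' reduction and estimate the cross term of the exact expansion, as the paper does.
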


\begin{proof}
Consider the $2\times 2$ fully-balanced matrices
\begin{align}
A=
\begin{pmatrix}
a_1 & a_2\\a_3 & a_4
\end{pmatrix} 
\quad \mathrm{and} \quad 
B=
\begin{pmatrix}
b_1 & b_2\\b_3 & b_4\end{pmatrix}.\nonumber
\end{align}
By Theorem \ref{balanced1}, we have the fully-balanced matrix \begin{align}
A+B:=
\begin{pmatrix}
a_1+b_1 & a_2+b_2\\a_3+b_3 & a_4+b_4\end{pmatrix}.\nonumber
\end{align}
It follows that 
\begin{align}
det(A+B)&=(a_1+b_1)(a_4+b_4)-(a_2+b_2)(a_3+b_3)\nonumber \\&=(a_1a_4-a_2a_3)+(b_1b_4-b_2b_3)+(a_1b_4+b_1a_4-a_2b_3-b_2a_3)\nonumber \\& \approx det(A)+det(B)+2(b_1a_1-a_2b_2)\nonumber \\&\approx det(A)+det(B)+2b_1(a_1-a_2)\nonumber
\end{align}
where we have utilized the fact that $A$ and $B$ are fully-balanced matrices, and that $B$ has a fair discrepancy along rows or columns. Using the fact that $\min(\mathcal{M})\approx 0$, the result follows from Theorem \ref{eigenproof}.
\end{proof}

\begin{remark}
Theorem \ref{homomorphism} tells us that the determinant can be made an approximate homomorphism on any two fully-balanced matrices of not-too-small entries, by making the least element in the spectrum of one matrix negligible and avoiding outliers in the entries and rows of the second.
\end{remark}

\begin{conjecture}
Let $A, B\in \mathcal{B}_{n}({\mathbb{R}^{+}})$, where $\mathcal{B}_{n}({\mathbb{R}^{+}})$ is the space of $n\times n$ balanced-matrices with $a_{ij}\geq 1$ and $b_{ij}\geq 1$. Let $\mathcal{M}=\{|\lambda_1|,|\lambda_2|,\ldots,|\lambda_n|\}$ be the spectrum of $A$. If $\min(\mathcal{M})\approx 0$ and $B$ has a fair discrepancy along rows or columns, then  
\begin{align}
det(A+B)\approx det(A)+det(B).\nonumber 
\end{align}
\end{conjecture}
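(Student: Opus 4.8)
The plan is to mimic the argument of Theorem \ref{homomorphism}, but to replace the brute-force $2\times 2$ expansion by the multilinearity of the determinant in its columns. Writing $A=(A^{(1)}\mid\cdots\mid A^{(n)})$ and $B=(B^{(1)}\mid\cdots\mid B^{(n)})$ for the column decompositions, multilinearity gives
\begin{align}
\det(A+B)=\sum_{S\subseteq\{1,\ldots,n\}}\det\big(M_S\big),\nonumber
\end{align}
where $M_S$ is the matrix whose $j$-th column is $A^{(j)}$ when $j\in S$ and $B^{(j)}$ otherwise. The two extreme terms $S=\{1,\ldots,n\}$ and $S=\emptyset$ contribute exactly $\det(A)$ and $\det(B)$, so the entire content of the statement reduces to the claim that the remaining $2^{n}-2$ mixed terms sum to something negligible.

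Next I would extract the structural consequences of the two hypotheses, exactly as in the $2\times 2$ case. First, since $B$ is fully-balanced and has a fair discrepancy along rows or columns, the reasoning behind Theorem \ref{smaller system 1} and the $2\times 2$ proposition following it should force all entries of $B$ to be approximately equal, so that any two columns satisfy $B^{(i)}\approx B^{(j)}$. Second, because $\min(\mathcal{M})\approx 0$, I would invoke the analogue of Theorem \ref{eigenproof}: in the $2\times 2$ setting $\min(\mathcal{M})$ controls the column differences $|a_{ik}-a_{jk}|$, so $\min(\mathcal{M})\approx 0$ collapses the columns of $A$ onto a common vector, giving $A^{(i)}\approx A^{(j)}$ as well.

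With both collapses in hand the mixed terms die for a uniform reason. For $n\geq 3$ every proper non-empty $S$ satisfies either $|S|\geq 2$ or $|S^{c}|\geq 2$; in the first case $M_S$ carries two approximately equal columns of $A$, and in the second it carries two approximately equal columns of $B$. A determinant with two approximately equal columns is approximately $0$, so $\det(M_S)\approx 0$ for every mixed $S$, whence
\begin{align}
\det(A+B)\approx\det(A)+\det(B).\nonumber
\end{align}
The base case $n=2$ is precisely Theorem \ref{homomorphism}, which anchors the induction on the dimension.

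The hard part will be the second step, namely the passage from $\min(\mathcal{M})\approx 0$ to the near-collapse of the columns of $A$. Theorem \ref{eigenproof} is available only for $2\times 2$ matrices, where a single small eigenvalue already forces rank one; for $n\geq 3$ a lone eigenvalue near $0$ merely renders $A$ nearly singular, i.e.\ of rank roughly $n-1$, which is far weaker than column-collapse and does \emph{not} by itself annihilate the $n$ surviving terms that draw exactly one column from $B$. Establishing an $n\times n$ eigenvalue-to-entry dictionary strong enough to upgrade ``nearly singular'' to ``nearly rank one'' under the balanced hypothesis is therefore the genuine obstacle, and it is exactly this gap that keeps the assertion at the level of a conjecture rather than a theorem.
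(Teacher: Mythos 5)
This statement is left as a \emph{conjecture} in the paper --- the authors prove only the $2\times 2$ case (Theorem \ref{homomorphism}) --- so there is no proof of record to compare yours against; the question is whether your argument actually closes the gap, and by your own admission it does not. Your reduction is sound as far as it goes: multilinearity in columns gives $\det(A+B)=\sum_{S}\det(M_S)$, the two extreme terms are $\det(A)$ and $\det(B)$, and your treatment of $B$ (fair discrepancy plus fully balanced forces all entries of $B$ to be approximately equal, by the mechanism of Theorem \ref{smaller system 1}) is consistent with the paper's framework. But the step you flag as the ``genuine obstacle'' is worse than unproven: the intermediate claim that $\min(\mathcal{M})\approx 0$ forces the columns of $A$ onto a common vector is \emph{false} for higher $n$. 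Consider
\begin{align}
A=\begin{pmatrix}a & b & a & b\\ b & a & b & a\\ a & b & a & b\\ b & a & b & a\end{pmatrix},
\qquad a,b\geq 1,\quad a\neq b.\nonumber
\end{align}
Every row and every column has squared sum $2a^2+2b^2$, so $A$ is fully balanced with entries $\geq 1$; its spectrum is $\{2(a+b),\,2(a-b),\,0,\,0\}$, so $\min(\mathcal{M})=0$ exactly; yet columns $1$ and $2$ differ by $|a-b|$ in every entry, which can be made arbitrarily large. Hence no eigenvalue-to-entry dictionary in the spirit of Theorem \ref{eigenproof} can upgrade ``one small eigenvalue'' to ``near rank one'' at this level of generality, and the uniform column-collapse route cannot work.

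Two further points. First, this example does not refute the conjecture itself: here the mixed terms still vanish, but for a partitioned reason --- any $M_S$ whose $A$-columns include the pair $\{1,3\}$ or $\{2,4\}$ has two equal columns, and every remaining mixed $M_S$ carries two approximately equal columns of $B$ --- so both sides of the asserted relation are $\approx 0$. This suggests the conjecture may be salvageable, but only via a finer structure theorem (e.g., that a balanced positive matrix with a near-zero eigenvalue has some \emph{pair} of nearly proportional columns, not that all columns collapse), which is precisely what is missing. Second, two minor defects: your closing appeal to ``induction on the dimension'' is vacuous, since your argument is direct for each $n$ and no step passes from $n-1$ to $n$; and summing $2^n-2$ terms each ``$\approx 0$'' silently multiplies the error by a factor exponential in $n$, which is harmless for fixed $n$ but should be said within the paper's already loose $\approx$ calculus.
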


\section{Quadratic forms associated with balanced matrices}\label{sec:quadratic}

In this section, we examine various forms associated with balanced matrices. For the time being we study the quadratic forms associated with fully-balanced $2\times 2$ matrices. We review therefore the following definitions.

\begin{definition}
Let 
\begin{align}
A:=
\begin{pmatrix}
a & b\\c & d
\end{pmatrix}\nonumber
\end{align}
be any symmetric matrix. By the quadratic form of $A$, we mean the expression
\begin{align}
F(x,y):=ax^2+2bxy+dy^2.\nonumber
\end{align}
\end{definition}
\bigskip

Let 
\begin{align}
A=
\begin{pmatrix}
a & b \\c & d 
\end{pmatrix}\nonumber
\end{align}
be a fully-balanced symmetric matrix. The associated quadratic form can be written as 
\begin{align}
F(x,y)&:=ax^2+2bxy+dy^2\nonumber \\& \approx a(x^2+y^2)+2bxy\nonumber \\& \approx a(x+y)^2+2(b-a)xy.\nonumber
\end{align}
Using Theorem \ref{eigenproof}, we can write 
\begin{align}
F(x,y)&:\approx a(x+y)^2+2(b-a)xy.\nonumber \\& \approx \bigg(\frac{\lambda_2-|\lambda_1|}{2}\bigg)(x+y)^2+2|\lambda_1| xy\nonumber
\end{align}
if $b>a$. Similarly if $b<a$, then the quadratic form looks a lot like
\begin{align}
F(x,y)&=ax^2+2bxy+dy^2\nonumber \\& \approx \bigg(\frac{\lambda_2+|\lambda_1|}{2}\bigg)(x+y)^2-2|\lambda_1| xy,\nonumber
\end{align}
where $\lambda_2$ and $\lambda_1$ are the worst and the least eigenvalues of $A$ respectively. More formally we launch a proposition concerning the quadratic forms associated with $2\times 2$ fully-balanced symmetric matrices. 

\begin{proposition}\label{quadratic forms}
Let $A$ be a fully-balanced $2\times 2$ symmetric matrix, such that $a_{ij}\geq 1$ for $1\leq i,j\leq 2$. Let $\mathcal{N}:=\{|\lambda_1|,|\lambda_2|\}$ be the spectrum of $A$, and where $\max(\mathcal{N})=|\lambda_2|$ and $\min(\mathcal{N})=|\lambda_1|$. Then one of the following is an approximation of the quadratic form of $A$ 
\begin{align}
F(x,y):\approx \bigg(\frac{\lambda_2-|\lambda_1|}{2}\bigg)(x+y)^2+2|\lambda_1|xy\nonumber
\end{align}
or 
\begin{align}
F(x,y):\approx \bigg(\frac{\lambda_2+|\lambda_1|}{2}\bigg)(x+y)^2-2|\lambda_1|xy.\nonumber
\end{align}
\end{proposition}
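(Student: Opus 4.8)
The plan is to reduce the quadratic form to a completed-square expression and then translate its coefficients into spectral data by means of Theorem \ref{eigenproof}. First I would write out
\begin{align}
F(x,y) = ax^2 + 2bxy + dy^2, \nonumber
\end{align}
and exploit the two structural hypotheses on $A$. Because $A$ is symmetric we already have $b = c$, and because $A$ is fully-balanced the horizontal-balance relation $a^2 + b^2 \approx c^2 + d^2$ together with $b = c$ forces $a^2 \approx d^2$; since the entries are positive this yields $a \approx d$, exactly the reduction carried out in the proof of Theorem \ref{balanced1}. Substituting $d \approx a$ and completing the square then gives
\begin{align}
F(x,y) \approx a(x^2 + y^2) + 2bxy \approx a(x+y)^2 + 2(b-a)xy, \nonumber
\end{align}
which isolates the two coefficients $a$ and $b-a$ that must now be expressed through the spectrum.

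Next I would invoke Theorem \ref{eigenproof} applied to the first row of $A$, which supplies $a + b \approx \max(\mathcal{N}) = |\lambda_2|$ and $|a - b| \approx \min(\mathcal{N}) = |\lambda_1|$. The decisive step is a case split on the sign of $b - a$. If $b > a$, then $b - a \approx |\lambda_1|$, and solving the pair $a + b \approx |\lambda_2|$, $b - a \approx |\lambda_1|$ returns $a \approx (|\lambda_2| - |\lambda_1|)/2$ together with $b - a \approx |\lambda_1|$. If instead $b < a$, then $a - b \approx |\lambda_1|$, and the same pair now yields $a \approx (|\lambda_2| + |\lambda_1|)/2$ together with $b - a \approx -|\lambda_1|$.

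Finally I would substitute these two sets of values back into the completed-square form. The case $b > a$ produces
\begin{align}
F(x,y) \approx \bigg(\frac{\lambda_2 - |\lambda_1|}{2}\bigg)(x+y)^2 + 2|\lambda_1| xy, \nonumber
\end{align}
while the case $b < a$ produces
\begin{align}
F(x,y) \approx \bigg(\frac{\lambda_2 + |\lambda_1|}{2}\bigg)(x+y)^2 - 2|\lambda_1| xy, \nonumber
\end{align}
which are precisely the two stated approximations. The main obstacle I anticipate is bookkeeping the $\approx$ relation through the chain of substitutions: in particular, justifying that the identifications $a + b \approx |\lambda_2|$ and $|a - b| \approx |\lambda_1|$ coming from Theorem \ref{eigenproof} remain compatible after completing the square, and confirming that the sign of $b - a$ is the only remaining freedom, so that exactly these two forms and no third alternative can occur.
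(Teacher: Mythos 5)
Your proposal is correct and matches the paper's own argument essentially step for step: the paper's proof (contained in the discussion immediately preceding the proposition) likewise uses $a\approx d$ to reduce $F$ to $a(x+y)^2+2(b-a)xy$, then applies Theorem \ref{eigenproof} to get $a+b\approx|\lambda_2|$ and $|a-b|\approx|\lambda_1|$, splitting into the cases $b>a$ and $b<a$. Your version is if anything slightly more complete, since you explicitly derive $a\approx d$ from symmetry and the balance relations rather than assuming it.
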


\begin{proof}
The result follows from the ensuing discussion concerning quadratic forms of fully-balanced matrices.
\end{proof}

\begin{remark}
Proposition \ref{quadratic forms} tells us that we do not necessarily need the entries of a $2\times 2$ symmetric fully-balanced matrices to compute the values of their quadratic forms. Given the eigenvalues of $A$, we can with some precision predict the quadratic form of any fully-balanced $2\times 2$ symmetric matrices, without knowing the entries. 
\end{remark}

\section{Further remarks}

In this paper, we have introduced the concept of balanced matrices, where we studied various matrix statistics underlying this concept. Much emphasis was placed on $2\times 2$ fully-balanced matrices. This can be considered as the beginning of a series of papers to study this concept. There is much optimistic work in progress to extend these results for lower order square matrices to matrices of higher orders. Another quest, in the not too distant future, will be to find if there really exist some bit of interaction between this class of matrices and matrices in general. This could provide a new window through which to study matrix theory.

\footnote{
\par
.}%

\bibliographystyle{amsplain}

\end{document}